\newtheorem{theorem}{Theorem}[section]
\newtheorem{corollary}[theorem]{Corollary}
\newtheorem{lemma}[theorem]{Lemma}
\newtheorem{proposition}[theorem]{Proposition}
\theoremstyle{definition}
\newtheorem{definition}[theorem]{Definition}
\newtheorem{question}[theorem]{Question}
\newtheorem{remark}[theorem]{Remark}
\newtheoremstyle{principle}{}{}{\itshape}{}{\bfseries}{.}{.5em}{\thmnote{#3}#1}
\theoremstyle{principle}
\newtheoremstyle{case}{}{}{}{}{\itshape}{.}{.5em}{\thmnote{Case #3}#1}
\theoremstyle{case}
\newcommand{\ran}{\operatorname{ran}}
\newcommand{\pair}[1]{\langle #1 \rangle}
\newcommand{\Iff}{\Longleftrightarrow}
\newcommand{\from}{\Longleftarrow}
\newcommand{\forces}{\Vdash}
\newcommand{\0}{\mathbf{0}}
\begin{document}

\title{On Mathias generic sets}

\author{Peter A.\ Cholak}
\address{Department of Mathematics\\
University of Notre Dame\\
Notre Dame, Indiana 46556 U.S.A.}
\email{cholak@nd.edu}

\author{Damir D.\ Dzhafarov}
\address{Department of Mathematics\\
University of Notre Dame\\
Notre Dame, Indiana 46556 U.S.A.}
\email{ddzhafar@nd.edu}

\author{Jeffry L.\ Hirst}
\address{Department of Mathematical Sciences\\
Appalachian State University\\
\\Boone, North Carolina 28608 U.S.A.}
\email{jlh@math.appstate.edu}

\thanks{The authors are grateful to Christopher P.\ Porter for helpful comments. The first author was partially supported by NSF grant DMS-0800198. The second author was partially supported by an NSF Postdoctoral Fellowship. The third author was partially supported by grant ID\#20800 from the John Templeton Foundation. The opinions expressed in this publication are those of the authors and do not necessarily reflect the views of the John Templeton Foundation.}

\maketitle

\begin{abstract}
We present some results about generics for computable Mathias forcing. The $n$-generics and weak $n$-generics in this setting form a strict hierarchy as in the case of Cohen forcing. We analyze the complexity of the Mathias forcing relation, and show that if $G$ is any $n$-generic with $n \geq 3$ then it satisfies the jump property $G^{(n-1)} = G' \oplus \emptyset^{(n)}$. We prove that every such $G$ has generalized high degree, and so cannot have even Cohen 1-generic degree. On the other hand, we show that $G$, together with any bi-immune set $A \leq_T \emptyset^{(n-1)}$, computes a Cohen $n$-generic set.
\end{abstract}

\section{Introduction}

Forcing has been a central technique in computability theory since it was introduced (in the form we now call Cohen forcing) by Kleene and Post to exhibit a degree strictly between $\0$ and $\0'$. The study of the algorithmic properties of Cohen generic sets, and of the structure of their degrees, has long been a rich source of problems and results. In the present paper, we propose to undertake a similar investigation of generic sets for (computable) Mathias forcing, and present some of our initial results in this direction.

Mathias forcing was perhaps first used in computability theory by Soare in \cite{Soare-1969} to build a set with no subset of strictly higher degree. Subsequently, it became a prominent tool for constructing infinite homogeneous sets for computable colorings of pairs of integers, as in Seetapun and Slaman \cite{SS-1995}, Cholak, Jockusch, and Slaman \cite{CJS-2001}, and Dzhafarov and Jockusch \cite{DJ-2009}. It has also found applications in algorithmic randomness, in Binns, Kjos-Hanssen, Lerman, and Solomon \cite{BKLS-2006}.

We show below that a number of results for Cohen generics hold also for Mathias generics, and that a number of others do not. The main point of distinction is that neither the set of conditions, nor the forcing relation is computable, so many usual techniques do not carry over. We begin with background in Section \ref{sec_defns}, and present some preliminary results in Section \ref{sec_basic}. In Section \ref{sec_forcing} we characterize the complexity of the forcing relation, and in Section 5 we prove a number of results about the degrees of Mathias generic sets, and about their relationship to Cohen generic degrees. We indicate questions along the way we hope will be addressed in future work.

\section{Definitions}\label{sec_defns}

We assume familiarity with the terminology particular to Cohen forcing. (For background on computability theory, see \cite{Soare-TA}. For background on Cohen generic sets, see Section 1.24 of \cite{DH-2010}.) The purpose of this section is to set down the precise analogues for Mathias forcing. These are standard, but their formalizations in the setting of computability theory require some care. A slightly different presentation is given in \cite[Section 6]{BKLS-2006}, over which ours has the benefit of reducing the complexity of the set of conditions from $\Sigma^0_3$ to $\Pi^0_2$.

\begin{definition}
\
\begin{enumerate}
\item A \emph{(computable Mathias) pre-condition} is a pair $(D,E)$ where $D$ is a finite set, $E$ is a computable set, and $\max D < \min E$.
\item A \emph{(computable Mathias) condition} is a pre-condition $(D,E)$, such that $E$ is infinite.
\item A pre-condition $(D',E')$ \emph{extends} a pre-condition $(D,E)$, written $(D',E') \leq (D,E)$, if $D \subseteq D' \subseteq D \cup E$ and $E' \subseteq E$.
\item A set $A$ \emph{satisfies} a pre-condition $(D,E)$ if $D \subseteq A \subseteq D \cup E$.
\end{enumerate}
\end{definition}

By an \emph{index} for a pre-condition $(D,E)$ we shall mean a pair $(d,e)$ such that $d$ is the canonical index of $D$ and $E = \{x : \Phi_e(x) \downarrow = 1\}$. In particular, if $E$ is finite, $\Phi_e$ need not be total. This definition makes the set of all indices $\Pi^0_1$, but we can pass to a computable subset containing an index for every pre-condition. Namely, define a strictly increasing computable function $g$ by
\[
\Phi_{g(d,e)}(x) =
\begin{cases}
0 & \text{if } x \leq \max D_d,\\
\Phi_e(x) & \text{otherwise.}
\end{cases}
\]
Then the set of pairs of the form $(d,g(d,e))$ is computable, and each is an index for a pre-condition. Moreover, if $(d,e)$ is an index as well, then it and $(d,g(d,e))$ index the same pre-condition. Formally, all references to pre-conditions in the sequel will be to indices from this set, and we shall treat $D$ and $E$ as numbers when convenient.

Note that whether one pre-condition extends another is a $\Pi^0_2$ question. By adopting the convention that for all $e$ and $x$, if $\Phi_e(x) \downarrow$ then $\Phi_e(y) \downarrow \in \{0,1\}$ for all $y \leq x$, the same question for conditions becomes $\Pi^0_1$.

In what follows, a \emph{$\Sigma^0_n$ set of conditions} refers to a $\Sigma^0_n$-definable set of pre-conditions, each of which is a condition. (Note that this is not the same as the set of all conditions satisfying a given $\Sigma^0_n$ definition, as discussed further in the next section.) As usual, we call such a set \emph{dense} if it contains an extension of every condition.

\begin{definition}
Fix $n \in \omega$.
\begin{enumerate}
\item A set $A$ \emph{meets} a set $\mathcal{C}$ of conditions if it satisfies some member of $\mathcal{C}$.
\item A set $A$ \emph{avoids} a set $\mathcal{C}$ of conditions if it meets the set of conditions having no extension in $\mathcal{C}$.
\item A set $G$ is \emph{Mathias $n$-generic} if it meets or avoids every $\Sigma^0_n$ set of conditions.
\item A set $G$ is \emph{weakly Mathias $n$-generic} if it meets every dense $\Sigma^0_n$ set of conditions.
\end{enumerate}
\end{definition}

\noindent We call a degree \emph{generic} if it contains a set that is $n$-generic for all $n$.

It is easy to see that for every $n \geq 2$, there exists a Mathias $n$-generic $G \leq_T \emptyset^{(n)}$ (indeed, even $G' \leq_T\emptyset^{(n)}$). This is done just as in Cohen forcing, but as there is no computable listing of $\Sigma^0_n$ sets of conditions, one goes through the $\Sigma^0_n$ sets of pre-conditions and checks which of these consist of conditions alone. We pass to some other basic properties of generics. We shall refer to Mathias $n$-generics below simply as $n$-generics when no confusion is possible.

\section{Basic results}\label{sec_basic}

Note that the set of all conditions is $\Pi^0_2$. Thus, the set of conditions satisfying a given $\Sigma^0_n$ definition is $\Sigma^0_n$ if $n \geq 3$, and $\Sigma^0_3$ otherwise. For $n < 3$, we may thus wish to consider the following stronger form of genericity, which has no analogue in the case of Cohen forcing.

\begin{definition}
A set $G$ is \emph{strongly $n$-generic} if, for every $\Sigma^0_n$-definable set of pre-conditions $\mathcal{C}$, either $G$ meets some condition in $\mathcal{C}$ or $G$ meets the set of conditions not extended by any condition in $\mathcal{C}$.
\end{definition}

\begin{proposition}\label{prop_n3}
For $n \geq 3$, a set is strongly $n$-generic if and only if it is $n$-generic. For $n \leq 2$, a set is strongly $n$-generic if and only if it is $3$-generic.
\end{proposition}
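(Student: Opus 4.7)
The plan is to handle the $n \geq 3$ case via the observation already flagged in the paper: the set of all conditions is $\Pi^0_2$, hence $\Sigma^0_n$ for $n \geq 3$. Intersecting any $\Sigma^0_n$-definable set of pre-conditions with the conditions yields a $\Sigma^0_n$ set of conditions, and every $\Sigma^0_n$ set of conditions is trivially a $\Sigma^0_n$-definable set of pre-conditions. Both directions of the equivalence then unpack routinely from the two definitions.

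For $n \leq 2$, the implication ``$3$-generic $\Rightarrow$ strongly $n$-generic'' is analogous: if $\mathcal{C}$ is a $\Sigma^0_n$-definable set of pre-conditions, then $\mathcal{C}$ intersected with the $\Pi^0_2$ conditions has complexity $\Sigma^0_2 \wedge \Pi^0_2 \subseteq \Sigma^0_3$, and a $3$-generic decides any such $\Sigma^0_3$ set of conditions. For the converse it suffices to prove the stronger statement that strong $1$-genericity implies $3$-genericity, since strong $2$-genericity trivially implies strong $1$-genericity, and the forward chain $3\text{-generic} \Rightarrow \text{strongly } 2\text{-generic} \Rightarrow \text{strongly } 1\text{-generic}$ then forces all three notions to coincide.

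To show that strong $1$-genericity implies $3$-genericity, fix a $\Sigma^0_3$ set $\mathcal{A}$ of conditions and write $\mathcal{A} = \bigcup_k \mathcal{A}_k$ with each $\mathcal{A}_k = \{(D, e) : \forall m\, \exists l\, R(D, e, k, m, l)\}$ for a computable $R$. I will construct a $\Sigma^0_1$ set $\mathcal{B}$ of pre-conditions such that: (i) whenever $(D, \tilde{e}) \in \mathcal{B}$ is a condition there exists some $(D, e) \in \mathcal{A}$ with $(D, \tilde{e}) \leq (D, e)$; and (ii) every condition $(D^*, E^*)$ having a condition-extension in $\mathcal{A}$ also has a condition-extension in $\mathcal{B}$. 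Granted this, applying strong $1$-genericity to $\mathcal{B}$ decides $\mathcal{A}$: if $G$ meets a condition in $\mathcal{B}$ then by (i), together with the fact that satisfaction is preserved when passing from an extension to the weaker condition, $G$ satisfies some condition in $\mathcal{A}$; if $G$ meets a condition with no condition-extension in $\mathcal{B}$ then by (ii) it has no condition-extension in $\mathcal{A}$, so $G$ avoids $\mathcal{A}$.

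The members of $\mathcal{B}$ are indexed by tuples $(D, e, k, g)$, producing pre-conditions $(D, \tilde{e}(D, e, k, g))$, where $\tilde{e}$ is a computable index (uniform in the tuple) such that $E_{\tilde{e}}$ is a computable subset of $E_e$ and, roughly, $x \in E_{\tilde{e}}$ iff $x \in E_e$ and, for $i$ the position of $x$ in the enumeration of $E_e$, every $c < i$ admits an $l$ with $R(D, e, k, c, l)$ bounded by $\Phi_g(i)$ (so the check is computable once a bound is available). The main obstacle is arranging that $E_{\tilde{e}}$ is infinite iff $(D, e) \in \mathcal{A}_k$. One direction is clear: infinitude of $E_{\tilde{e}}$ forces $R$-witnesses for arbitrarily large $c$, giving $\forall c\, \exists l\, R$. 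The other direction fails for any single fixed $g$, because the least-witness function of $R$ can grow arbitrarily fast; the resolution is to quantify over all computable $g$. When $(D, e) \in \mathcal{A}_k$, the least-witness function $f(c) = \mu l\, R(D, e, k, c, l)$ is total computable, so choosing $g$ to index the cumulative maximum $i \mapsto \max_{c \leq i} f(c)$ makes $E_{\tilde{e}}$ infinite for that $g$. Enumerating over all tuples $(D, e, k, g)$ thus keeps $\mathcal{B}$ computably enumerable while ensuring that some tuple witnesses each $\mathcal{A}$-positive condition, delivering both (i) and (ii).
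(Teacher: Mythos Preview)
Your treatment of the case $n \geq 3$ and of the implication ``$3$-generic $\Rightarrow$ strongly $n$-generic'' for $n \leq 2$ matches the paper's. The interesting comparison is in the remaining implication.

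The paper proves the sharper statement that every strongly $0$-generic set is $3$-generic, and its construction is simpler than yours. Given a $\Sigma^0_3$ set $\mathcal{C}$ of conditions, written as $(D,E)\in\mathcal{C} \iff (\exists a)(\forall x)(\exists y)R(D,E,a,x,y)$, the paper defines a single new index by
\[
\Phi_{g(D,E,a)}(x)\simeq
\begin{cases}
\Phi_E(x) & \text{if }(\exists y)\,R(D,E,a,x,y)\text{ and }\Phi_E(x)\!\downarrow,\\
\uparrow & \text{otherwise,}
\end{cases}
\]
and takes $\mathcal{C}'=\{(D,g(D,E,a)):D,E,a\}$. The unbounded search for $y$ is simply absorbed into the partiality of the new index: $\Phi_{g(D,E,a)}$ is total (and equals $\Phi_E$) exactly when $(\forall x)(\exists y)R$ holds, so the conditions in $\mathcal{C}'$ are precisely the members of $\mathcal{C}$. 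Since $g$ is arranged to be strictly increasing, $\mathcal{C}'$ is outright computable, and strong $0$-genericity already decides it.

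Your construction reaches the same destination by a detour: you insist on a \emph{bounded} witness check $\exists l\le\Phi_g(i)\,R(\ldots)$ and then compensate by quantifying over all bound functions $g$. This works, but the extra parameter is unnecessary once one realizes that divergence of the new index can itself encode the failure of $(\exists y)R$. A second cost of your formulation is that you only claim $\mathcal{B}$ is $\Sigma^0_1$, and accordingly prove only that strong $1$-genericity implies $3$-genericity; this leaves the $n=0$ case of the proposition unaddressed. In fact your $\mathcal{B}$, like the paper's $\mathcal{C}'$, is the range of a total computable function of the tuple $(D,e,k,g)$ and can be made computable by taking $\tilde{e}$ strictly increasing --- so your argument, once tightened, also yields the $n=0$ case.
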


\begin{proof}
Evidently, every strongly $n$-generic set is $n$-generic. Now suppose $\mathcal{C}$ is a $\Sigma^0_n$ set of pre-conditions, and let $\mathcal{C}'$ consist of all the conditions in $\mathcal{C}$. An infinite set meets or avoids $\mathcal{C}$ if and only if it meets or avoids $\mathcal{C}'$, so every $\max\{n,3\}$-generic set meets or avoids $\mathcal{C}$. For $n \geq 3$, this means that every $n$-generic set is strongly $n$-generic, and for $n \leq 2$ that every $3$-generic set is strongly $n$-generic.

It remains to show that every strongly $0$-generic set is $3$-generic. Let $\mathcal{C}$ be a given $\Sigma^0_3$ set of conditions, and let $R$ be a computable relation such that $(D,E)$ belongs to $\mathcal{C}$ if and only if $(\exists a)(\forall x)(\exists y)R(D,E,a,x,y)$. Define a strictly increasing computable function $g$ by
\[
\Phi_{g(D,E,a)}(x) =
\begin{cases}
\Phi_e(x) & \text{if } (\exists y) R(D,E,a,x,y) \text{ and } \Phi_E(x) \downarrow,\\
\uparrow & \text{otherwise},
\end{cases}
\]
and let $\mathcal{C}'$ be the computable set of all pre-conditions of the form $(D,g(D,E,a))$. If $(D,E) \in \mathcal{C}$ then $\Phi_E$ is total and so there is an $a$ such that $\Phi_{g(D,E,a)} = \Phi_E$. If, on the other hand, $(D,E)$ is a pre-condition not in $\mathcal{C}$ then for each $a$ there is an $x$ such that $\Phi_{g(D,E,a)}(x) \uparrow$. Thus, the members of $\mathcal{C}$ are precisely the conditions in $\mathcal{C}'$, so an infinite set meets or avoids one if and only if it meets or avoids the other. In particular, every strongly $0$-generic set meets or avoids $\mathcal{C}$.
\end{proof}

As a consequence, we shall restrict ourselves to $3$-genericity or higher from now on. Without further qualification, $n$ below will always be a number $\geq 3$.

\begin{proposition}
Every $n$-generic set is weakly $n$-generic, and every weakly $n$-generic set is $(n-1)$-generic.
\end{proposition}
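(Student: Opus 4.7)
The first assertion is immediate from the definitions: given a dense $\Sigma^0_n$ set of conditions $\mathcal{C}$ and an $n$-generic $G$, I would note that $G$ must meet or avoid $\mathcal{C}$. If $G$ avoided $\mathcal{C}$ it would satisfy a condition with no extension in $\mathcal{C}$, directly contradicting density. Hence $G$ meets $\mathcal{C}$.

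For the second assertion, the plan is to reduce the question of meeting or avoiding an arbitrary $\Sigma^0_{n-1}$ set of conditions $\mathcal{C}$ to meeting a single dense $\Sigma^0_n$ set $\mathcal{C}^*$, which I would define as
\[
\mathcal{C}^* = \{p : p \text{ is a condition, and either } p \leq q \text{ for some } q \in \mathcal{C}, \text{ or no condition extending } p \text{ lies in } \mathcal{C}\}.
\]
Density of $\mathcal{C}^*$ is a straightforward case split: given a condition $p$, if some $q \in \mathcal{C}$ extends $p$ then $q$ itself lies in $\mathcal{C}^*$ (via the first disjunct, since $q \leq q \in \mathcal{C}$) and $q \leq p$; otherwise $p$ itself is in $\mathcal{C}^*$ via the second disjunct. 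Assuming weak $n$-genericity, $G$ then meets $\mathcal{C}^*$ at some condition $p'$: if $p'$ qualifies by the first disjunct, then $p' \leq q$ for some $q \in \mathcal{C}$, and the inclusions $D_q \subseteq D_{p'} \subseteq G \subseteq D_{p'} \cup E_{p'} \subseteq D_q \cup E_q$ show $G$ satisfies $q$, so $G$ meets $\mathcal{C}$; if by the second disjunct, then $p'$ witnesses that $G$ avoids $\mathcal{C}$.

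The main work, and mild obstacle, is the complexity computation showing $\mathcal{C}^*$ is $\Sigma^0_n$, which is where the standing assumption $n \geq 3$ enters. Using the convention from Section \ref{sec_defns} that extension between conditions is $\Pi^0_1$, the first disjunct $\exists q\,[q \in \mathcal{C} \wedge p \leq q]$ is the existential quantifier applied to a conjunction of a $\Sigma^0_{n-1}$ predicate with a $\Pi^0_1$ predicate; since $n-1 \geq 2$, this collapses to $\Sigma^0_{n-1}$. The second disjunct $\forall q\,[q \in \mathcal{C} \to q \not\leq p]$ is similarly $\Pi^0_{n-1}$. Conjoining with the $\Pi^0_2$ statement that $p$ is a condition yields a $\Sigma^0_n$ definition of $\mathcal{C}^*$ consisting entirely of conditions, completing the reduction.
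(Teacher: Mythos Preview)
Your proof is correct and follows essentially the same approach as the paper: define a dense set consisting of conditions that either witness meeting $\mathcal{C}$ or witness avoiding it, and verify this set is $\Sigma^0_n$. The only cosmetic differences are that you close the first disjunct downward under extension (harmless) and, by phrasing the second disjunct as $\forall q\,[q \in \mathcal{C} \to q \not\leq p]$, you exploit that $\mathcal{C}$ contains only conditions uniformly for all $n \geq 3$; the paper instead writes the avoidance clause with an explicit ``$(D',E')$ is a condition'' antecedent and then splits off $n=3$ as a separate case where that clause must be simplified away.
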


\begin{proof}
The first implication is clear. For the second, let a $\Sigma^0_{n-1}$ set $\mathcal{C}$ of conditions be given. Let $\mathcal{D}$ be the class of all conditions that are either in $\mathcal{C}$ or else have no extension in $\mathcal{C}$, which is clearly dense. If $n \geq 4$, then $\mathcal{D}$ is easily seen to be $\Sigma^0_n$ (actually $\Pi^0_{n-1}$) since saying a condition $(D,E)$ has no extension in $\mathcal{C}$ is expressed as
\[
\forall (D',E')[[(D',E') \text{ is a condition } \wedge (D',E') \leq (D,E)] \implies (D',E') \notin \mathcal{C}].
\]
If $n = 3$, this makes $\mathcal{D}$ appear to be $\Sigma^0_4$ but since $\mathcal{C}$ is a set of conditions only, we can re-write the above line as
\[
\forall (D',E')[(\forall x)[\Phi_{E'}(x) \downarrow = 1 \wedge \Phi_E(x) \downarrow \implies \Phi_E(x) = 1] \implies (D',E') \notin \mathcal{C}],
\]
which gives a $\Sigma^0_3$ definition. In either case, then, a weakly $n$-generic set must meet $\mathcal{D}$, and hence must either meet or avoid $\mathcal{C}$.
\end{proof}

The proof of the following proposition is straightforward.

\begin{proposition}\label{prop_genfacts}
Every weakly $n$-generic set $G$ is hyperimmune relative to $\emptyset^{(n-1)}$. If $G$ is $n$-generic, then its degree forms a minimal pair with $\0^{(n-1)}$.
\end{proposition}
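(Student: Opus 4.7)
The strategy for both parts follows the Cohen-forcing template: in each part I identify an appropriate $\Sigma^0_n$ class of conditions and let $n$-genericity do the work. The main new feature is that one must track the computable reservoir $E$ alongside the finite stem $D$. \textbf{Hyperimmunity.} Fix $f \leq_T \emptyset^{(n-1)}$; the goal is to show $f$ does not dominate the principal function $p_G$ of $G$. For each $m$, set
\[
\mathcal{D}_m = \{(D,E) : \exists k \geq m \ (k \leq |D| \wedge p_D(k) > f(k))\},
\]
where $p_D$ enumerates $D$ in increasing order. Since $f$ is $\Delta^0_n$ and infinitude of $E$ is $\Pi^0_2$, $\mathcal{D}_m$ is a $\Sigma^0_n$ set of conditions. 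It is dense: given $(D,E)$, choose $k > \max(m, |D|)$, pick $y_1 < \cdots < y_{k-|D|}$ in $E$ with $y_{k-|D|} > f(k)$, and adjoin them to $D$. Any $G$ meeting $\mathcal{D}_m$ via $(D,E)$ has $p_G(k) = p_D(k) > f(k)$ for the witnessing $k \geq m$, because $\max D < \min E$ and $G \subseteq D \cup E$ force the first $|D|$ elements of $G$ to be exactly $D$. Letting $m$ vary, weak $n$-genericity yields the claim.

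\textbf{Minimal pair.} Suppose $X = \Phi_e^G$ is total with $X \leq_T \emptyset^{(n-1)}$; I want $X$ computable. Consider
\[
\mathcal{S} = \{(D,E) : \exists k \ (\Phi_e^D(k)\downarrow \text{ with use} < \min E \text{ and } \Phi_e^D(k) \neq X(k))\},
\]
a $\Sigma^0_n$ class of conditions. If $G$ satisfied some $(D,E) \in \mathcal{S}$, the use condition together with $G \setminus D \subseteq E$ would force $G \cap [0,\text{use}] = D \cap [0,\text{use}]$, hence $\Phi_e^G(k) = \Phi_e^D(k) \neq X(k)$, a contradiction. By $n$-genericity, $G$ avoids $\mathcal{S}$: some $(D^*,E^*)$ satisfied by $G$ has no extension in $\mathcal{S}$. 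I then compute $X(k)$ uniformly in the index of $E^*$ (a computable set, so no real oracle is needed) by searching for any finite $D' \supseteq D^*$ with $D' \subseteq D^* \cup E^*$ such that $\Phi_e^{D'}(k)\downarrow$ with use below $\min(E^* \cap (\max D' + 1, \infty))$, and outputting the value. The value must equal $X(k)$, because $(D', E^* \cap (\max D' + 1, \infty))$ is an extension of $(D^*,E^*)$ and hence not in $\mathcal{S}$.

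\textbf{Main obstacle.} The delicate point is ensuring the search above actually terminates. The naive candidate $D' = G \cap [0,u]$, with $u$ the true use of $\Phi_e^G(k)$, may fail: $E^*$ can contain elements in $(\max D', u]$, pushing $\min(E^* \cap (\max D' + 1, \infty))$ to at most $u$. The remedy is to pad: take $D' = (G \cap [0,u]) \cup \{y\}$ for any $y \in E^*$ with $y > u$. Then $\max D' > u$, and the oracle bit at $y$ does not affect the computation since it lies above its use. This padding trick is precisely what the "use $< \min E$" formulation of $\mathcal{S}$ was designed to accommodate; once it is in place, the rest is bookkeeping.
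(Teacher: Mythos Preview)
Your argument is correct and follows the standard template; the paper itself omits the proof, calling it ``straightforward,'' so there is nothing to compare against beyond confirming that your details go through. One cosmetic point in the termination step of the minimal-pair half: your candidate $D' = (G \cap [0,u]) \cup \{y\}$ need not contain $D^*$ when $\max D^* > u$, so it may fail the search constraint $D' \supseteq D^*$. The fix is immediate---replace $u$ by $u' = \max(u,\max D^*)$ (the computation $\Phi_e^G(k)$ still has use $\leq u'$) and choose $y \in E^*$ with $y > u'$; then $D^* \subseteq G \cap [0,u'] \subseteq D'$ and the rest of your verification is unchanged.
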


\begin{corollary}
Not every $n$-generic set is weakly $(n+1)$-generic.
\end{corollary}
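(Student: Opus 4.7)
The plan is to derive the corollary directly from Proposition~\ref{prop_genfacts} by exhibiting an $n$-generic whose principal function is computable from $\emptyset^{(n)}$. Indeed, applying the first clause of Proposition~\ref{prop_genfacts} with $n+1$ in place of $n$, any weakly $(n+1)$-generic set is hyperimmune relative to $\emptyset^{(n)}$. So it suffices to produce an $n$-generic $G$ that fails to be hyperimmune relative to $\emptyset^{(n)}$.

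For this I would invoke the construction mentioned in Section~\ref{sec_defns}: for every $n \geq 3$ there is a Mathias $n$-generic $G$ with $G \leq_T \emptyset^{(n)}$ (in fact, even $G' \leq_T \emptyset^{(n)}$). Since $G$ is infinite, its principal function $p_G$ is $\emptyset^{(n)}$-computable, and $p_G$ trivially dominates itself. Therefore $G$ is not hyperimmune relative to $\emptyset^{(n)}$, and by Proposition~\ref{prop_genfacts} cannot be weakly $(n+1)$-generic.

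There is no real obstacle here; the entire content of the corollary is the combination of the two facts already in hand, namely the existence of a sufficiently low $n$-generic and the relativized hyperimmunity of weak generics. The only point to verify is that the witnessing $G$ from Section~\ref{sec_defns} is a genuine $n$-generic, which was asserted there.
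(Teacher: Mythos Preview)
Your proposal is correct and follows essentially the same argument as the paper: take an $n$-generic $G \leq_T \emptyset^{(n)}$, observe that its principal function is dominated by a $\emptyset^{(n)}$-computable function, and conclude via Proposition~\ref{prop_genfacts} that $G$ cannot be weakly $(n+1)$-generic. The paper's proof is slightly terser but uses exactly this idea.
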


\begin{proof}
Take any $n$-generic $G \leq_T \emptyset^{(n)}$. Then $G$ is not hyperimmune relative to $\emptyset^{(n+1)}$, and so cannot be weakly $(n+1)$-generic.
\end{proof}

We shall separate weakly $n$-generic sets from $n$-generic sets in Section \ref{sec_degree}, thereby obtaining a strictly increasing sequence of genericity notions
\[
\text{weakly $3$-generic $\from$ $3$-generic $\from$ weakly $4$-generic $\from$ $\cdots$} 
\]
as in the case of Cohen forcing. In many other respects, however, the two types of genericity are very different. For instance, as noted in \cite[Section 4.1]{CJS-2001}, every Mathias generic $G$ is cohesive, i.e., satisfies $G \subseteq^* W$ or $G \subseteq^* \overline{W}$ for every computably enumerable set $W$. In particular, if we write $G = G_0 \oplus G_1$ then one of $G_0$ or $G_1$ is finite. This is be false for Cohen generics, which, by an analogue of van Lambalgen's theorem due to Yu \cite[Proposition 2.2]{Yu-2006}, have relatively $n$-generic halves. Thus, no Mathias generic can be even Cohen $1$-generic.

\begin{question}
What form of van Lambalgen's theorem holds for Mathias forcing?
\end{question}

Another basic fact is that every Mathias $n$-generic $G$ is high, i.e., satisfies $G' \geq_T \emptyset''$. (See \cite{BKLS-2006}, Corollary 6.7, or \cite{CJS-2001}, Section 5.1 for a proof.) By contrast, it is a well-known result of Jockusch \cite[Lemma 2.6]{Jockusch-1980} that every Cohen $n$-generic set $G$ satisfies $G^{(n)} \equiv_T G \oplus \emptyset^{(n)}$. As no high $G$ can satisfy $G'' \leq_T G \oplus \emptyset''$, it follows that no Mathias generic can have even Cohen 2-generic degree. The same argument does not prevent a Mathias $n$-generic from having Cohen $1$-generic degree, as there are high 1-generic sets, but we prove in Corollary \ref{cor_Mathno1Coh} that this does not happen either.

\section{The forcing relation}\label{sec_forcing}

Much of the discrepancy between Mathias and Cohen genericity stems from the fact that the complexity of forcing a formula, defined below, does not agree with the complexity of the formula.

We regard every $\Sigma^0_0$ formula $\varphi$ as being written in disjunctive normal form according to some fixed effective procedure for doing so. Let $n_\varphi$ denote the number of disjuncts. For each $i < n_\varphi$, let $P_{\varphi,i}$ be the set of all $n$ such that $n \in X$ is a conjunct of the $i$th disjunct, and let $N_{\varphi,i}$ be the set of all $n$ such that $n \notin X$ is a conjunct of the $i$th disjunct. Canonical indices for these sets can be found uniformly effectively from an index for $\varphi$.

\begin{definition}
Let $(D,E)$ be a condition and let $\varphi(X)$ be a formula in exactly one free set variable. If $\varphi$ is $\Sigma^0_0$, say $(D,E)$ \emph{forces} $\varphi(G)$, written $(D,E) \forces \varphi(G)$, if for some $i < n_{\varphi}$, $P_{\varphi,i} \subseteq D$ and $N_{\varphi,i} \subseteq \overline{D \cup E}$. From here, extend the definition of $(D,E) \forces \varphi(G)$ to arbitrary $\varphi$ inductively according to the standard definition of strong forcing.
\end{definition}

\begin{remark}\label{rem_sigma0}
Note that if $\varphi$ is $\Sigma^0_0$ and $A$ is any set then $\varphi(A)$ holds if and only if there is an $i < n_\varphi$ such that $P_{\varphi,i} \subseteq A$ and $N_{\varphi,i} \subseteq \overline{A}$. Hence, $(D,E) \forces \varphi(G)$ if and only if $\varphi(D \cup F)$ holds for all finite $F \subset E$.
\end{remark}

\begin{lemma}\label{lem_forcebounds}
Let $(D,E)$ be a condition and let $\varphi(X)$ be a formula in exactly one free set variable.
\begin{enumerate}
\item If $\varphi$ is $\Sigma^0_0$ then the relation $(D,E) \forces \varphi(G)$ is computable.
\item If $\varphi$ is $\Pi^0_1$, $\Sigma^0_1$, or $\Sigma^0_2$, then so is the relation $(D,E) \forces \varphi(G)$.
\item For $n \geq 2$, if $\varphi$ is $\Pi^0_n$ then the relation of $(D,E) \forces \varphi(G)$ is $\Pi^0_{n+1}$.
\item For $n \geq 3$, if $\varphi$ is $\Sigma^0_n$ then the relation $(D,E) \forces \varphi(G)$ is $\Sigma^0_{n+1}$.
\end{enumerate}
\end{lemma}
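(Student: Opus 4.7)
The plan is to prove all four parts by induction on the structure of $\varphi$, using the standard strong-forcing clauses $(D,E)\forces \exists y\,\psi(y,G) \iff \exists n\,(D,E)\forces \psi(n,G)$ and similarly for $\forall$. This reduces everything to the $\Sigma^0_0$ base case. For (1), I would unpack the definition directly: $(D,E)\forces \varphi(G)$ holds iff some DNF-disjunct $i < n_\varphi$ satisfies $P_{\varphi,i}\subseteq D$ and $N_{\varphi,i}\subseteq \overline{D\cup E}$. The canonical indices of the finite sets $P_{\varphi,i},N_{\varphi,i}$ are uniformly effective in $\varphi$, containment in the finite set $D$ is decidable, and containment in $\overline{D\cup E}$ is decidable because $(D,E)$ is a condition, so $E$ is infinite; by the indexing convention in Section~\ref{sec_defns}, $\Phi_e$ is then total, so each ``$n\notin E$'' query terminates.

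For (2), I apply the strong-forcing clauses once or twice on top of (1). Specifically, $\Sigma^0_1 = \exists y\,\psi$ with $\psi\in\Sigma^0_0$ becomes $\exists n$ over a computable relation, hence $\Sigma^0_1$; $\Pi^0_1 = \forall y\,\psi$ becomes $\Pi^0_1$ dually; and $\Sigma^0_2 = \exists y\,\theta$ with $\theta\in\Pi^0_1$ becomes $\exists n$ over the $\Pi^0_1$ relation from the previous case, so $\Sigma^0_2$.

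For (3) and (4), I would run mutual induction on $n$, bootstrapping off (2). For $\Sigma^0_n$ with $n\geq 3$, write $\varphi = \exists y\,\theta$ with $\theta\in\Pi^0_{n-1}$; by (3), $\forces\theta$ is $\Pi^0_n$, and adding an outer $\exists$ yields $\Sigma^0_{n+1}$. For $\Pi^0_n$ with $n\geq 2$, write $\varphi = \forall y\,\psi$ with $\psi\in\Sigma^0_{n-1}$; when $n\geq 4$, invoke (4) to get $\forces\psi\in\Sigma^0_n$, while for $n\in\{2,3\}$ invoke (2) to get the sharper $\forces\psi\in\Sigma^0_{n-1}\subseteq\Sigma^0_n$; adding $\forall n$ then yields $\Pi^0_{n+1}$.

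The main obstacle I foresee is coordinating the mutual induction at the boundary $n\in\{2,3\}$, where the bounds claimed in (3)/(4) are one level weaker than what (2) actually produces; the induction closes only once one notes that this discrepancy is harmless because $\Sigma^0_{n-1}\subseteq\Sigma^0_n$. Beyond this, one must check that all bounds are uniform in the indices of $\varphi$ and of $(D,E)$, which is immediate since the canonical indices of $P_{\varphi,i},N_{\varphi,i}$, the $\Phi_e$ queries, and the quantifier-clause unfoldings are all uniform in their inputs.
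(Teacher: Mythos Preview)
Your proposal has a genuine gap: the clause you state for universal quantifiers is not the strong-forcing clause. Strong forcing treats $\forall y\,\psi$ as $\neg\exists y\,\neg\psi$, and the negation clause reads $(D,E)\forces\neg\chi$ iff no condition extending $(D,E)$ forces $\chi$. So forcing a $\Pi^0_m$ formula involves a universal quantifier over all \emph{extending conditions}, not merely over the numerical parameter. This is exactly what the paper uses: look at the argument for part 2 (where an extension $(D',E')$ is built to witness failure of forcing) and at the characterization in part 3 for $n=2$ (which begins ``for every $a$ and every condition $(D',E')$ extending $(D,E)$'').

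Once you use the correct clause, the complexity count is no longer trivial, because ``$(D',E')$ is a condition'' is $\Pi^0_2$ (infinitude of $E'$) and ``$(D',E')\leq(D,E)$'' is $\Pi^0_1$. Unfolding the definition of forcing a $\Pi^0_2$ formula directly gives something like
\[
\forall(D',E')\,\forall a\,\big[\text{cond}\wedge\text{ext}\ \to\ \exists(D'',E'')\,[\text{cond}\wedge\text{ext}\wedge\exists b\,(D'',E'')\forces\theta(a,b,G)]\big],
\]
and the inner existential over conditions $(D'',E'')$ drags in a $\Pi^0_2$ conjunct, pushing the whole expression to $\Pi^0_4$ on its face rather than the claimed $\Pi^0_3$. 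The paper's actual work is to show that the inner witness may always be taken of the special form $(D'\cup F,\{x\in E':x>k\})$, which is automatically a condition and whose extension relation is trivial; this is what brings the bound down to $\Pi^0_3$. Similarly, the $\Pi^0_1$ case in part 2 is not ``dual'' to the $\Sigma^0_1$ case: the paper must argue that the a priori extension-quantified definition collapses to the $\Pi^0_1$ statement $\forall a\,\forall F\subset E\,\theta(a,D\cup F)$. Your inductive scheme for (3) and (4) at levels $n\geq3$ is fine once these base cases are established, but the base cases themselves need the arguments you omitted.
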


\begin{proof}
We first prove 1. If $\varphi$ is $\Sigma^0_0$ and $\varphi(D \cup F)$ does not hold for some finite $F \subset E$, then neither does
\[
\varphi(D \cup (F \cap (\bigcup_{i < n_\varphi} P_{\varphi,i} \cup N_{\varphi,i}))).
\]
So by Remark \ref{rem_sigma0}, we have that $(D,E) \forces \varphi(G)$ if and only if $\varphi(D \cup F)$ holds for all finite $F \subset E \cap (\bigcup_{i < n_\varphi} P_{\varphi,i} \cup N_{\varphi,i})$, which can be checked computably.

For 2, suppose that $\varphi(X) \equiv (\forall x)\theta(x,X)$, where $\theta$ is $\Sigma^0_0$. We claim that $(D,E)$ forces $\varphi(G)$ if and only if $\theta(a,D \cup F)$ holds for all $a$ and all finite $F \subset E$, which makes the forcing relation $\Pi^0_1$. The right to left implication is clear. For the other, suppose there is an $a$ and a finite $F \subset E$ such that $\theta(a,D \cup F)$ does not hold. Writing $\theta_a(X)$ for the formula $\theta(a,X)$, let $D' = D \cup F$ and
\[
E' = \{x \in E: x > \max D \cup F \cup \bigcup_{i < n_{\theta_a}} P_{\theta_a,i} \cup N_{\theta_a,i}\},
\]
so that $(D',E')$ is a condition extending $(D,E)$. Then if $(D'',E'')$ is any extension of $(D',E')$, we have that
\[
D'' \cap (\bigcup_{i < n_{\theta_a}} P_{\theta_a,i} \cup N_{\theta_a,i})) = (D \cup F) \cap (\bigcup_{i < n_{\theta_a}} P_{\theta_a,i} \cup N_{\theta_a,i})),
\]
and so $\theta(a,D'')$ cannot force $\theta(a,G)$. Thus $(D,E)$ does not force $\varphi(G)$. The rest of 2 follows immediately, since forcing a formula that is $\Sigma^0_1$ over another formula is $\Sigma^0_1$ over the complexity of forcing that formula.

We next prove 3 for $n = 2$. Suppose that $\varphi(G) \equiv (\forall x)(\exists y) \theta(x,y,X)$ where $\theta$ is $\Sigma^0_0$. Our claim is that $(D,E) \forces \varphi(G)$ if and only if, for every $a$ and every condition $(D',E')$ extending $(D,E)$, there is a finite $F \subset E'$ and a number $k > \max F$ such that
\begin{equation}\label{eq_1}
(D' \cup F, \{x \in E' : x > k\}) \forces (\exists y)\theta(a,y,G),
\end{equation}
which is a $\Pi^0_3$ definition. Since the condition on the left side of \eqref{eq_1} extends $(D',E')$, this definition clearly implies forcing. For the opposite direction, suppose $(D,E) \forces \varphi(G)$ and fix any $a$ and $(D',E') \leq (D,E)$. Then by definition, there is a $b$ and a condition $(D'',E'')$ extending $(D',E')$ that forces $\theta(a,b,G)$. Write $\theta_{a,b}(X) = \theta(a,b,X)$, and let $F \subset E'$ be such that $D'' = D' \cup F$. Since $\theta_{a,b}(D' \cup F)$ holds, we must have $P_{\theta_{a,b},i} \subseteq D' \cup F$ and $N_{\theta_{a,b},i} \cap (D' \cup F) = \emptyset$ for some $i < n_{\theta_{a,b}}$. Thus, if we let $k = \max N_{\theta_{a,b},i}$, we obtain \eqref{eq_1}.

To complete the proof, we prove 3 and 4 for $n \geq 3$ by simultaneous induction on $n$. Clearly, 3 for $n-1$ implies 4 for $n$, so we already have 4 for $n = 3$. Now assume 4 for some $n \geq 3$. The definition of forcing a $\Pi^0_{n+1}$ statement is easily seen to be $\Pi^0_2$ over the relation of forcing a $\Sigma^0_n$ statement, and hence $\Pi^0_{n+2}$ by hypothesis. Thus, 3 holds for $n+1$.
\end{proof}

We shall see in Corollary \ref{cor_strict} that the complexity bounds in parts 3 and 4 of the lemma cannot be lowered even to $\Delta^0_{n+1}$. One consequence is that an $n$-generic set only decides every $\Sigma^0_{n-1}$ formula, not necessarily every $\Sigma^0_n$ formula.

\begin{proposition}\label{prop_gentruth}
Let $G$ be $n$-generic, and for $m \leq n$ let $\varphi(X)$ be a $\Sigma^0_m$ or $\Pi^0_m$ formula in exactly one free set variable. If $(D,E)$ is any condition satisfied by $G$ that forces $\varphi(G)$, then $\varphi(G)$ holds.
\end{proposition}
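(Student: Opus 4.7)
The plan is to proceed by induction on $m$. In the base case $m = 0$, if $(D,E) \forces \varphi(G)$ for a $\Sigma^0_0$ formula $\varphi$, then by definition some disjunct $i$ satisfies $P_{\varphi,i} \subseteq D$ and $N_{\varphi,i} \subseteq \overline{D \cup E}$; since $G$ satisfies $(D,E)$, we obtain $P_{\varphi,i} \subseteq G$ and $N_{\varphi,i} \cap G = \emptyset$, and Remark \ref{rem_sigma0} then yields $\varphi(G)$.

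For the $\Sigma^0_{m+1}$ step, write $\varphi(X) \equiv (\exists x)\psi(x,X)$ with $\psi$ a $\Pi^0_m$ formula. The strong forcing clause for $\exists$, which is implicit in the complexity bookkeeping of Lemma \ref{lem_forcebounds}, supplies an $a$ with $(D,E) \forces \psi(a,G)$. The inductive hypothesis then delivers $\psi(a,G)$, hence $\varphi(G)$.

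The $\Pi^0_{m+1}$ case is where $n$-genericity is invoked. Write $\varphi(X) \equiv (\forall x)\psi(x,X)$ with $\psi$ a $\Sigma^0_m$ formula. The forcing clause for universals (the one underlying the characterization in the proof of Lemma \ref{lem_forcebounds}(3)) says that for each $a$ the set
\[
\mathcal{C}_a = \{(D',E') : (D',E') \text{ is a condition and } (D',E') \forces \psi(a,G)\}
\]
is dense below $(D,E)$. Lemma \ref{lem_forcebounds}, together with $m \leq n-1$ and $n \geq 3$, shows $\mathcal{C}_a$ is $\Sigma^0_n$ in every subcase. Since $G$ is $n$-generic, it meets or avoids $\mathcal{C}_a$. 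To rule out avoidance, suppose a condition $(D^*,E^*)$ satisfied by $G$ had no extension in $\mathcal{C}_a$; one can then form the common extension
\[
(D_0,E_0) := (D \cup D^*,\ (E \cap E^*) \setminus [0,\max(D \cup D^*)]).
\]
Because $G$ satisfies both $(D,E)$ and $(D^*,E^*)$, we get $D_0 \subseteq G \subseteq D_0 \cup E_0$, and the infiniteness of $G$ (true for any $n$-generic) guarantees $E_0$ is infinite, so $(D_0,E_0)$ is a condition extending both $(D,E)$ and $(D^*,E^*)$. Density of $\mathcal{C}_a$ below $(D,E)$ then produces an extension of $(D_0,E_0)$, and hence of $(D^*,E^*)$, inside $\mathcal{C}_a$, a contradiction. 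So $G$ meets $\mathcal{C}_a$, delivering a condition satisfied by $G$ that forces $\psi(a,G)$, and the inductive hypothesis gives $\psi(a,G)$. Since $a$ was arbitrary, $\varphi(G)$ holds.

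The hard part is the $\Pi^0_{m+1}$ case: one must verify that $\mathcal{C}_a$ lands in $\Sigma^0_n$ across the different regimes of Lemma \ref{lem_forcebounds} ($m \leq 2$ versus $m \geq 3$), and justify the common-extension construction. The latter is a routine Mathias compatibility argument, but it does lean on the infiniteness of $G$ to ensure the $E$-component does not collapse, which is precisely the place where genericity enters beyond the ``meets or avoids'' dichotomy.
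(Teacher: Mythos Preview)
Your proof is correct and follows essentially the same inductive strategy as the paper's: the base case and $\Sigma$ step are immediate, and the $\Pi^0_{m+1}$ step uses that $\mathcal{C}_a$ is $\Sigma^0_n$ (via Lemma~\ref{lem_forcebounds} and $n\geq 3$) together with a common-extension argument to rule out avoidance. The paper is terser---it simply notes that an avoiding condition $(D',E')$ forces $\neg\psi(a,G)$ and then that a common extension with $(D,E)$ would force both $\psi(a,G)$ and its negation---while you spell out the explicit compatibility construction $(D\cup D^*,\, (E\cap E^*)\setminus[0,\max(D\cup D^*)])$ and the role of $G$'s infiniteness; but the underlying argument is the same.
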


\begin{proof}
If $m = 0$, then $\varphi$ holds of any set satisfying $(D,E)$, whether it is generic or not. If $m > 0$ and the result holds for $\Pi^0_{m-1}$ formulas, it also clearly holds for $\Sigma^0_m$ formulas. Thus, we only need to show that if $m > 0$ and the result holds for $\Sigma^0_{m-1}$ formulas then it also holds for $\Pi^0_m$ formulas. To this end, suppose $\varphi(X) \equiv (\forall x)\theta(x,X)$, where $\theta$ is $\Sigma^0_{m-1}$. For each $a$, let $\mathcal{C}_a$ be the set of all conditions forcing $\theta(a,X)$, which has complexity at most $\Sigma^0_n$ by Lemma~\ref{lem_forcebounds}. Hence, $G$ meets or avoids each $\mathcal{C}_a$. But if $G$ were to avoid some $\mathcal{C}_a$, say via a condition $(D',E')$, then $(D',E')$ would force $\neg \theta(a,G)$, and then $(D,E)$ and $(D',E')$ would have a common extension forcing $\theta(a,G)$ and $\neg \theta(a,G)$. Thus, $G$ meets every $\mathcal{C}_a$, so $\theta(a,G)$ holds for all $a$ by hypothesis, meaning $\varphi(G)$ holds.
\end{proof}

\begin{remark}\label{rem_formneg}
It is not difficult to see that if $\varphi(G)$ is the negation of a $\Sigma^0_m$ formula then any condition $(D,E)$ forcing $\varphi(G)$ forces an equivalent $\Pi^0_m$ formula. Thus, if $G$ is $n$-generic and satisfies such a condition, then $\varphi(G)$ holds.
\end{remark}

\section{Degrees of Mathias generics}\label{sec_degree}

We begin here with a jump property for Mathias generics similar to that of Jockusch for Cohen generics. It follows that the degrees $\mathbf{d}$ satisfying $\mathbf{d}^{(n-1)} = \mathbf{d'} \cup \mathbf{0}^{(n-1)}$ yield a strict hierarchy of subclasses of the high degrees.

\begin{theorem}\label{thm_jumpgen}
For all $n \geq 2$, if $G$ is $n$-generic then $G^{(n-1)} \equiv_T G' \oplus \emptyset^{(n)}$.
\end{theorem}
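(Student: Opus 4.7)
The plan is to establish the two Turing reductions separately. The direction $G'\oplus\emptyset^{(n)}\leq_T G^{(n-1)}$ combines two facts: $G'\leq_T G^{(n-1)}$ is automatic for $n\geq 2$, and since every Mathias $n$-generic $G$ is high we have $G'\geq_T\emptyset''$; iterating the jump $n-2$ further times gives
\[
G^{(n-1)} \;=\; (G')^{(n-2)} \;\geq_T\; (\emptyset'')^{(n-2)} \;=\; \emptyset^{(n)}.
\]
For $n=2$ this in fact finishes the proof, since then the reverse reduction is trivial.

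The substantive part is $G^{(n-1)}\leq_T G'\oplus\emptyset^{(n)}$ for $n\geq 3$. Since $G^{(n-1)}$ is Turing equivalent to a $\Sigma^0_{n-1}(G)$-complete set, it suffices to give a $G'\oplus\emptyset^{(n)}$-computable procedure that, uniformly in an index for a $\Sigma^0_{n-1}$ formula $\varphi(X)$, decides $\varphi(G)$. The plan is to exploit three ingredients. First, by Lemma \ref{lem_forcebounds} the relations $(D,E)\forces\varphi(G)$ and $(D,E)\forces\neg\varphi(G)$ are arithmetical at level at most $n$, hence uniformly decidable from $\emptyset^{(n)}$. Second, the relation ``$G$ satisfies $(D,E)$,'' which unpacks to $D\subseteq G\subseteq D\cup E$, is $\Pi^0_1(G)$ and hence uniformly decidable from $G'$. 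Third, letting $\mathcal{C}_\varphi$ be the $\Sigma^0_n$ set of all conditions that force $\varphi(G)$, $n$-genericity gives the meets-or-avoids dichotomy for $\mathcal{C}_\varphi$; a condition that $G$ satisfies and that avoids $\mathcal{C}_\varphi$ has no extension forcing $\varphi$, which by strong forcing is precisely a condition that forces $\neg\varphi(G)$, whence Remark \ref{rem_formneg} (applied to the $\Pi^0_{n-1}$ formula $\neg\varphi$, with $n-1\leq n$) yields $\neg\varphi(G)$. The symmetric analysis for $\neg\varphi$ then shows that whichever of $\varphi(G)$, $\neg\varphi(G)$ holds, $G$ witnesses it by satisfying some condition that forces it.

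Putting the ingredients together, each of the statements
\[
(\exists (D,E))\bigl[(D,E)\forces\psi(G)\ \wedge\ G\text{ satisfies }(D,E)\bigr], \qquad \psi\in\{\varphi,\neg\varphi\},
\]
is $\Sigma^0_1$ in $G'\oplus\emptyset^{(n)}$, and by the previous paragraph exactly one of the two is true. Running both semi-decision procedures in parallel produces the desired uniform decision of $\varphi(G)$ from $G'\oplus\emptyset^{(n)}$. I expect the main obstacle to be the third ingredient: confirming that the meets-or-avoids dichotomy for $\mathcal{C}_\varphi$ really translates, via strong forcing, into the dichotomy ``$G$ satisfies some condition forcing $\varphi$'' versus ``$G$ satisfies some condition forcing $\neg\varphi$,'' and that Proposition \ref{prop_gentruth} together with Remark \ref{rem_formneg} apply at precisely the level $m=n-1\leq n$ required. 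The first two ingredients are routine complexity bookkeeping on the forcing relation and a standard use of the jump operator.
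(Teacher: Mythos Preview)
Your proposal is correct and follows essentially the same route as the paper: both use highness for $G'\oplus\emptyset^{(n)}\leq_T G^{(n-1)}$, and for the converse both invoke Lemma~\ref{lem_forcebounds} to bound the complexity of $\mathcal{C}_\varphi=\{(D,E):(D,E)\forces\varphi(G)\}$ and its complement $\mathcal{D}_\varphi$, use $n$-genericity to get that $G$ meets exactly one of them, apply Proposition~\ref{prop_gentruth} and Remark~\ref{rem_formneg} to pass from forcing to truth, and then search with $\emptyset^{(n)}$ deciding membership in $\mathcal{C}_\varphi,\mathcal{D}_\varphi$ and $G'$ deciding satisfaction. Your explicit handling of $n=2$ as a trivial consequence of highness is a nice addition, since the paper's proof tacitly relies on its standing convention $n\geq 3$.
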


\begin{proof}
That $G^{(n-1)} \geq_T G' \oplus \emptyset^{(n)}$ follows from the fact that $G' \geq_T \emptyset''$. To show $G^{(n-1)} \leq_T G' \oplus \emptyset^{(n)}$, we wish to decide every $\Sigma^{0,G}_{n-1}$ sentences using $G' \oplus \emptyset^{(n)}$. Let $\varphi_0(X),\varphi_1(X),\ldots$, be a computable enumeration of all $\Sigma^0_{n-1}$ sentences in exactly one free set variable, and for each $i$ let $\mathcal{C}_i$ be the set of conditions forcing $\varphi_i(G)$, and $\mathcal{D}_i$ the set of conditions forcing $\neg \varphi(G)$. Then $\mathcal{D}_i$ is the set of conditions with no extension in $\mathcal{C}_i$, so if $G$ meets $\mathcal{C}_i$ it cannot also meet $\mathcal{D}_i$. On the other hand, if $G$ avoids $\mathcal{C}_i$ then it meets $\mathcal{D}_i$ by definition. Now by Lemma \ref{lem_forcebounds}, each $\mathcal{C}_i$ is $\Sigma^0_n$ since $n \geq 3$, and so it is met or avoided by $G$. Thus, for each $i$, either $G$ meets $\mathcal{C}_i$, in which case $\varphi(G)$ holds by Proposition \ref{prop_gentruth}, or else $G$ meets $\mathcal{D}_i$, in which case $\neg \varphi(G)$ holds by Remark \ref{rem_formneg}. To conclude the proof, we observe that $G' \oplus \emptyset^{(n)}$ can decide, uniformly in $i$, whether $G$ meets $\mathcal{C}_i$ or $\mathcal{D}_i$. Indeed, from a given $i$, indices for $\mathcal{C}_i$ and $\mathcal{D}_i$ (as a $\Sigma^0_n$ set and a $\Pi^0_n$ set, respectively) can be found uniformly computably, and then $\emptyset^{(n)}$ has only to produce these sets until a condition in one is found that is satisfied by $G$, which can in turn be determined by $G'$.
\end{proof}

\begin{corollary}\label{cor_strict}
For every $n \geq 2$ there is a $\Pi^0_n$ formula in exactly one free set variable, the relation of forcing which is not $\Delta^0_{n+1}$. For $n \geq 3$, there is also a $\Sigma^0_n$ such formula.
\end{corollary}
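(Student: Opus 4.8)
The overall target is to show that the bounds in Lemma \ref{lem_forcebounds}(3),(4) are sharp, so my first move is to try to extract this from Theorem \ref{thm_jumpgen}. Fix a Mathias $n$-generic $G$ with $G' \leq_T \emptyset^{(n)}$ (such a $G$ exists, as noted in Section \ref{sec_defns}). By the theorem $G^{(n-1)} \equiv_T G' \oplus \emptyset^{(n)} \equiv_T \emptyset^{(n)}$, hence $G^{(n)} \equiv_T \emptyset^{(n+1)}$, which lies strictly above $\emptyset^{(n)}$. The idea is that an overly simple forcing relation would let us compute $G^{(n)}$ from $\emptyset^{(n)}$ alone, collapsing this jump. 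Let $\varphi_0,\varphi_1,\dots$ enumerate the $\Sigma^0_n$ formulas in one free set variable, and let $\mathcal{C}_i,\mathcal{D}_i$ be the sets of conditions forcing $\varphi_i$ and $\neg\varphi_i$. The key preliminary observation is that the existential witness in a $\Sigma^0_n$ formula is $\Pi^0_{n-1}$, and an $n$-generic decides every $\Pi^0_{n-1}$ formula; forcing the witness then forces $\varphi_i$, so for \emph{any} $n$-generic $G$ we have $\varphi_i(G) \iff G \text{ meets } \mathcal{C}_i$ (forward by Proposition \ref{prop_gentruth}, backward by the witnessing argument).

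Now suppose, toward a contradiction, that forcing every $\Sigma^0_n$ \emph{and} every $\Pi^0_n$ formula were $\Delta^0_{n+1}$. Then $\mathcal{C}_i$ and $\mathcal{D}_i$ are uniformly $\emptyset^{(n)}$-computable, and since $\mathcal{C}_i \cup \mathcal{D}_i$ is dense, $\emptyset^{(n)}$ can build an $n$-generic $G$ with $G' \leq_T \emptyset^{(n)}$ meeting each $\mathcal{C}_i \cup \mathcal{D}_i$: at each stage it searches for an extension in the set, which exists by density and is recognized by $\emptyset^{(n)}$. For this $G$, deciding $\varphi_i(G)$ reduces to finding a satisfied member of $\mathcal{C}_i$ or $\mathcal{D}_i$, using $\emptyset^{(n)}$ to recognize the two sides and $G'$ to test satisfaction; hence $G^{(n)} \leq_T \emptyset^{(n)} \oplus G' \equiv_T \emptyset^{(n)}$, contradicting $G^{(n)} \equiv_T \emptyset^{(n+1)}$. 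This shows the bounds in (3) and (4) cannot \emph{both} be lowered to $\Delta^0_{n+1}$. For $n=2$ this already settles the $\Pi^0_2$ case, because forcing a $\Sigma^0_2$ formula is automatically $\Delta^0_3$ by Lemma \ref{lem_forcebounds}(2): assuming forcing every $\Pi^0_2$ formula were $\Delta^0_3$ would make both bounds $\Delta^0_3$, which we just ruled out.

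For $n \geq 3$ neither of the two bounds is free, and the jump argument yields only the disjunction ``some $\Sigma^0_n$ or some $\Pi^0_n$ formula has forcing above $\Delta^0_{n+1}$.'' To obtain the two statements separately I would pass to a direct coding argument, exhibiting a single $\Pi^0_n$ (resp.\ $\Sigma^0_n$) formula whose forcing relation is $\Pi^0_{n+1}$-complete (resp.\ $\Sigma^0_{n+1}$-complete) via a many-one reduction of $\overline{\emptyset^{(n+1)}}$ (resp.\ $\emptyset^{(n+1)}$), which a fortiori is not $\Delta^0_{n+1}$. The natural place to encode is the dense-forcing clause itself: forcing $(\forall x)\psi$ quantifies over all \emph{conditions} extending a given one, and ``being a condition,'' i.e.\ having an infinite reservoir, is $\Pi^0_2$-complete. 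By choosing the reservoirs and finite stems in the image of the reduction appropriately, one can make the nested infinitude-of-reservoir clauses appearing in the unwound forcing relation simulate the quantifiers of a complete predicate. A clean way to organize this may be an induction on $n$ reducing the $\Sigma^0_n$ case to the $\Pi^0_{n-1}$ case (take $\varphi \equiv (\exists x)\psi$ with $\psi$ the hard $\Pi^0_{n-1}$ formula, so that forcing $\varphi$ is an existential over a $\Pi^0_n$-hard relation), with the $n=2$ base supplied by the jump argument above.

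The main obstacle is precisely this coding, and the subtlety is that the hardness must not be \emph{absorbed}: the outer existential in the forcing relation of a $\Sigma^0_n$ formula can realize an arbitrary $\Sigma^0_{n+1}$ set as ``$\exists$ over a $\Delta^0_{n+1}$ relation,'' so being $\Sigma^0_{n+1}$-complete at the top is consistent with the underlying forcing relation being $\Delta^0_{n+1}$. The reduction therefore has to force the \emph{inner} $\Pi^0_n$ layer (the forcing of the matrix) to be genuinely $\Pi^0_n$-hard, and then push this down the quantifier complexity of the formula, matching each reservoir-infinitude quantifier to a level of the complete predicate. Getting this bookkeeping exactly right — in particular handling the $\Pi^0_3$ forcing relation, which cannot be reduced to a lower $\Sigma$-case and must be coded directly through the $\Pi^0_2$-complete reservoir condition — is where I expect the real work to lie.
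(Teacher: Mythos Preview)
Your jump argument is sound and correctly handles the $\Pi^0_2$ case, but for $n\geq 3$ it delivers only the disjunction ``some $\Sigma^0_n$ \emph{or} some $\Pi^0_n$ formula has non-$\Delta^0_{n+1}$ forcing,'' and you then embark on an incomplete direct-coding program to separate the two cases. This is where you diverge from the paper, and the coding is unnecessary.

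The observation you are missing is the one the paper states in its first sentence: the second part of the corollary implies the first. Forcing $(\exists x)\psi(x,G)$ is, by definition, a $\Sigma^0_1$ predicate over the relation ``$(D,E)\Vdash\psi(a,G)$.'' Hence if forcing the $\Pi^0_{n-1}$ matrix $\psi(a,X)$ were $\Delta^0_n$ uniformly in $a$, forcing the $\Sigma^0_n$ formula would be $\Sigma^0_n\subseteq\Delta^0_{n+1}$. Contrapositively, once some $\Sigma^0_n$ formula has forcing relation outside $\Delta^0_{n+1}$, some $\Pi^0_{n-1}$ formula has forcing relation outside $\Delta^0_n$. So proving the $\Sigma^0_n$ claim for every $n\geq 3$ automatically yields the $\Pi^0_m$ claim for every $m\geq 2$, with no coding required.

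With that reduction in hand, the paper's proof of the $\Sigma^0_n$ case is also shorter than yours: it does not build a tailored generic or argue separately that truth coincides with meeting $\mathcal{C}_i$ at level $n$. It simply notes that if forcing every $\Sigma^0_n$ formula were $\Delta^0_{n+1}$, then the oracle used in the proof of Theorem~\ref{thm_jumpgen} could be lowered by one jump, giving $G^{(n-1)}\leq_T G'\oplus\emptyset^{(n-1)}$; since $G$ is high, $\emptyset^{(n-1)}\leq_T G^{(n-2)}$, whence $G^{(n-1)}\leq_T G^{(n-2)}$, a contradiction. Your construction of a specific $G$ meeting each $\mathcal{C}_i\cup\mathcal{D}_i$ works, but it is extra machinery that the reduction above makes superfluous.
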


\begin{proof}
By the proof of Lemma \ref{lem_forcebounds}, the second part implies the first, so it suffices to prove it. If forcing every $\Sigma^0_n$ formula $\varphi(X)$ were $\Delta^0_{n+1}$, then the proof of Theorem \ref{thm_jumpgen} could be carried out computably in $G' \oplus \emptyset^{(n-1)}$ instead of $G' \oplus \emptyset^{(n)}$. Hence, we would have $G^{(n-1)} \equiv_T G' \oplus \emptyset^{(n-1)}$. But as $G$ is high, $\emptyset^{(n-1)} \leq_T G^{(n-2)}$, so this would yield $G^{(n-1)} \leq_T G^{(n-2)}$, a contradiction.
\end{proof}

The following result is the analogue of Theorem 2.3 of Kurtz \cite{Kurtz-1983} that every $A >_T \emptyset^{(n-1)}$ hyperimmune relative to $\emptyset^{(n-1)}$ is Turing equivalent to the $(n-1)$st jump of a weakly Cohen $n$-generic set. The proof, although mostly similar, requires a few important modifications. The main problem is in coding $B$ into $A^{(n-2)}$, which, in the case of Cohen forcing, is done by appending long blocks of $1$s to the strings under construction. As the infinite part of a Mathias condition can be made very sparse, we cannot use the same idea here. We highlight the changes below, and sketch the rest of the details.

\begin{proposition}
If $A >_T \emptyset^{(n-1)}$ is hyperimmune relative to $\emptyset^{(n-1)}$, then $A \equiv_T G^{(n-2)}$ for some weakly $n$-generic set $G$.
\end{proposition}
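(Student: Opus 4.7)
The plan is to build $G$ as the union of the finite parts of a descending, $A$-computable sequence of Mathias conditions $(D_0, E_0) \geq (D_1, E_1) \geq \cdots$. Three requirements must be balanced: weak $n$-genericity of $G$, the coding $A \leq_T G^{(n-2)}$, and the computation $G^{(n-2)} \leq_T A$.

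The computation $G^{(n-2)} \leq_T A$ will follow once the construction is arranged $A$-effectively. For any $\Sigma^0_{n-2}$ formula $\psi(X)$, the set of conditions forcing $\psi(G)$ or $\neg\psi(G)$ is dense and, by Lemma~\ref{lem_forcebounds}, of complexity at most $\Sigma^0_n$, so a weakly $n$-generic $G$ meets it; Proposition~\ref{prop_gentruth} together with Remark~\ref{rem_formneg} then settles $\psi(G)$ by inspection of the meeting condition. Since the forcing relation for $\Sigma^0_{n-2}$ formulas is at most $\Sigma^0_{n-1}$ and $A \geq_T \emptyset^{(n-1)}$, $A$ can perform this inspection uniformly. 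For the coding $A \leq_T G^{(n-2)}$, at stage $s$ I would choose a $\Sigma^0_{n-2}$ formula $\psi_s$ using fresh parameters above $\max D_s$, so that neither $\psi_s$ nor $\neg\psi_s$ is yet forced, and then extend the current condition to force $\psi_s$ when $A(s)=1$ and $\neg\psi_s$ when $A(s)=0$. This extension is itself found $A$-computably, since the relevant forcing relation is $\Sigma^0_{n-1}$.

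Weak $n$-genericity is arranged by hyperimmune permitting. Enumerate indices for the $\Sigma^0_n$ sets of pre-conditions $\mathcal{C}_0, \mathcal{C}_1, \ldots$; each is $A$-enumerable because $\Sigma^0_n$ sets are c.e.\ in $\emptyset^{(n-1)} \leq_T A$. Using a computable surjection that hits every requirement cofinally, at each stage I attempt the scheduled $\mathcal{C}_j$ by searching its enumeration for an extension of the current condition, cutting the search off after $p_A(s)$ steps. Because $p_A$ is not dominated by any $\emptyset^{(n-1)}$-computable function, for each $j$ and each $\emptyset^{(n-1)}$-computable bound on the time to produce an extension in $\mathcal{C}_j$, the bound is eventually beaten at some stage devoted to $j$; hence every dense $\mathcal{C}_j$ is met.

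The main obstacle, as the authors flag, is the coding step. Kurtz's Cohen argument codes $A$ into $G$ via variable-length blocks of $1$'s whose lengths are recovered at the jump, but in Mathias forcing the infinite part $E$ of a condition can be arbitrarily sparse, so appending blocks is not available. Routing the coding through forced $\Sigma^0_{n-2}$ formulas --- thereby pushing the decoding from $G$ itself up to $G^{(n-2)}$ --- avoids the sparseness issue, but requires that the fresh parameters used in each $\psi_s$ not be disturbed by later density extensions, lest an already-coded bit be flipped. Organizing this bookkeeping so that the coding and density steps coexist, together with the hyperimmune-permitting argument for density, is where the technical work of the proof concentrates.
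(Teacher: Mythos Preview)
Your outline gets the three ingredients right, and the hyperimmune-permitting sketch for weak $n$-genericity is standard. But the coding direction $A \leq_T G^{(n-2)}$ has a real gap, and the obstacle you name at the end is not the actual one.

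Once a condition forces $\psi_s(G)$ or $\neg\psi_s(G)$, every extension forces it too; coded bits cannot be ``flipped'' by later density extensions. The genuine problem is that your decoder, working from $G^{(n-2)}$, must know the parameters of each $\psi_s$ to read off $A(s)$. Those parameters are chosen relative to $(D_s,E_s)$, so the decoder must reconstruct the sequence of conditions stage by stage. But whether your genericity step succeeds at stage $s$ depends on whether an extension in $\mathcal{C}_{j(s)}$ is enumerated within $p_A(s)$ steps---information that is itself equivalent to $A$. With only the formulas $\psi_s$ recording bits of $A$, the decoder has no way to recover which density extensions were taken, and the reconstruction stalls.

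The paper's missing idea is to tag elements of $G$ so that the construction becomes self-describing. Fix a uniformly $\emptyset^{(n-1)}$-computable sequence $B_0,B_1,\ldots$ of pairwise disjoint co-immune sets. When acting for $\mathcal{C}_i$, first adjoin the least $b \in B_i \cap E$ to $D$ (co-immunity guarantees one exists in any infinite computable $E$, which simultaneously handles the sparseness issue); after extending into $\mathcal{C}_i$ and forcing the jump, code $A(k)$ by adjoining the least element of $B_{A(k)}$ in the current reservoir. Since $G$ is high, $G^{(n-2)} \geq_T \emptyset^{(n-1)}$, so the decoder can tell which $B_i$ each successive new element of $G$ belongs to: this reveals which $\mathcal{C}_i$ was met, hence the exact extension taken (the first one enumerated), and then whether the next coding element lies in $B_0$ or $B_1$. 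The tagging is exactly what lets the sequence be reconstructed without access to $p_A$.

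A smaller point: for $G^{(n-2)} \leq_T A$ the paper does not argue via genericity as you do. It explicitly forces $G'$ at each active stage, so $G' \leq_T A$, and then applies Theorem~\ref{thm_jumpgen} (at level $n-1$, using that weakly $n$-generic implies $(n-1)$-generic) to get $G^{(n-2)} \equiv_T G' \oplus \emptyset^{(n-1)} \leq_T A$.
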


\begin{proof}
Computably in $A$, we build a sequence of conditions $(D_0,E_0) \geq (D_1,E_1) \geq \cdots$ beginning with $(D_0,E_0) = (\emptyset,\omega)$. Let $\mathcal{C}_0,\mathcal{C}_1,\ldots$ be a listing of all $\Sigma^0_n$ sets of pre-conditions, and fixing a $\emptyset^{(n-1)}$-computable enumeration of each $\mathcal{C}_i$, let $\mathcal{C}_{i,s}$ be the set of all pre-conditions enumerated into $\mathcal{C}_i$ by stage $p_A(s)$. We may assume that $\pair{D,E} \leq s$ for all $(D,E) \in \mathcal{C}_{i,s}$. Let $B_0,B_1,\ldots$ be a uniformly $\emptyset^{(n-1)}$-computable sequence of pairwise disjoint co-immune sets. Say $\mathcal{C}_i$ \emph{requires attention} at stage $s$ if there exists $b \leq p_A(s)$ in $B_i \cap E_s$ and a condition $(D,E)$ in $\mathcal{C}_{i,s}$ extending $(D_s \cup \{b\},\{x \in E_s: x > b\})$.

At stage $s$, assume $(D_s,E_s)$ is given. If there is no $i \leq s$ such that $\mathcal{C}_i$ requires attention at stage $s$, set $(D_{s+1},E_{s+1}) = (D_s,E_s)$. Otherwise, fix the least such $i$. Choose the least corresponding $b$ and earliest enumerated extension $(D,E)$ in $\mathcal{C}_{i,s}$, and let $(D',E') = (D,E)$. Then obtain $(D'',E'')$ from $(D',E')$ by forcing the jump, in the usual manner. Finally, let $k$ be the number of stages $t < s$ such that $(D_t,E_t) \neq (D_{t+1},E_{t+1})$, and let $(D''',E''') = (D'' \cup \{b\},\{x \in E'': x > b\})$, where $b$ is the least element of $B_{A(k)} \cap E''$. If $\pair{D''',E'''} \leq s+1$, set $(D_{s+1},E_{s+1}) = (D''',E''')$, and otherwise set $(D_{s+1},E_{s+1}) = (D_s,E_s)$.

By definition, the $B_i$ must intersect every computable set infinitely often, and so the entire construction is $A$-computable. That $G = \bigcup_s D_s$ is weakly $n$-generic can be verified much like in Kurtz's proof, but using the $\emptyset^{(n-1)}$-computable function $h$ where $h(s)$ is the least $t$ so that for each $(D,E)$ with $\pair{D,E} \leq s$ there exists $b \leq t$ in $B_i \cap E$ and $(D',E') \in C_{i,t}$ extending $(D \cup \{b\},\{x \in E: x > b\})$. That $G^{(n-2)} \leq_T A$ follows by Theorem \ref{thm_jumpgen} from $G'$ being forced during the construction and thus being $A$-computable. Finally, to show $A \leq_T G^{(n-2)}$, let $s_0 < s_1 < \cdots$ be all the stages $s > 0$ such that $(D_{s-1},E_{s-1}) \neq (D_{s},E_{s})$. The sequence $(D_{s_0},E_{s_0}) > (D_{s_1},E_{s_1}) \cdots$ can be computed by $G^{(n-2)}$ as follows. Given $(D_{s_k},E_{s_k})$, the least $b \in G - D_{s_k}$ must belong to some $B_i$, and since $G^{(n-2)}$ computes $\emptyset^{(n-1)}$ it can tell which $B_i$. Then $G^{(n-2)}$ can produce $\mathcal{C}_i$ until the first $(D',E')$ extending $(D_{s_k} \cup \{b\},\{x \in E_{s_k} : x > b\})$, and then obtain $(D'',E'')$ from $(D',E')$ by forcing the jump. By construction, $G$ satisfies $(D'',E'')$ and $(D_{s_{k+1}},E_{s_{k+1}}) = (D'' \cup \{b\},\{x \in E'' : x > b\})$ for the least $b \in G - D_{s_{k+1}}$. And this $b$ is in $B_1$ or $B_0$ depending as $k$ is or is not in $B$.
\end{proof}

\begin{corollary}
Not every weakly $n$-generic set is $(n+1)$-generic.
\end{corollary}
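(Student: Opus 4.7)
The plan is to use the preceding proposition to produce a weakly $n$-generic $G$ satisfying $G \leq_T \emptyset^{(n)}$, after which the conclusion follows by essentially the same argument used earlier to separate $n$-generics from weakly $(n+1)$-generics: if $G \leq_T \emptyset^{(n)}$, then the principal function of $G$ is $\emptyset^{(n)}$-computable, so $G$ is not hyperimmune relative to $\emptyset^{(n)}$; but by Proposition~\ref{prop_genfacts} applied with $n+1$ in place of $n$, every weakly $(n+1)$-generic (and hence every $(n+1)$-generic) set is hyperimmune relative to $\emptyset^{(n)}$.

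To apply the preceding proposition in this way, I need a set $A$ with $\emptyset^{(n-1)} <_T A \leq_T \emptyset^{(n)}$ that is hyperimmune relative to $\emptyset^{(n-1)}$. I would take $A = B \oplus \emptyset^{(n-1)}$, where $B$ is any Cohen $1$-generic set relative to $\emptyset^{(n-1)}$ satisfying $B \leq_T \emptyset^{(n)}$, built by the usual relativized Kleene--Post-style construction using $\emptyset^{(n)}$ as oracle. Then $A \leq_T \emptyset^{(n)}$ is immediate, $A >_T \emptyset^{(n-1)}$ because $B \not\leq_T \emptyset^{(n-1)}$ by $1$-genericity, and $A$ has hyperimmune degree relative to $\emptyset^{(n-1)}$ because $B$ already does (every oracle $1$-generic computes a function not dominated by any function computable from its oracle, a standard fact). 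The preceding proposition then delivers a weakly $n$-generic $G$ with $G^{(n-2)} \equiv_T A$, so in particular $G \leq_T A \leq_T \emptyset^{(n)}$, as desired.

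The only remaining work is routine verification that the single $A$ above satisfies all three hypotheses of the preceding proposition simultaneously, which I do not expect to be a serious obstacle. The real content of the argument is the recognition that the preceding proposition is flexible enough to be invoked with an $A$ lying well below $\emptyset^{(n)}$, leaving the resulting weakly $n$-generic $G$ with too little computational power to be $\emptyset^{(n)}$-hyperimmune, and so in particular too little to be $(n+1)$-generic.
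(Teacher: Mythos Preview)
Your argument is correct for the corollary as literally stated, but it differs from the paper's proof in two respects, one cosmetic and one substantive.

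Cosmetically, your detour through a relativized Cohen $1$-generic $B$ is unnecessary: you only need $G\leq_T\emptyset^{(n)}$, and this already follows from applying the preceding proposition with the simplest choice $A=\emptyset^{(n)}$, since then $G\leq_T G^{(n-2)}\equiv_T\emptyset^{(n)}$. Nothing is gained by pushing $A$ strictly below $\emptyset^{(n)}$.

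Substantively, the paper's proof establishes more than the corollary's stated conclusion: taking $A=\emptyset^{(n)}$ so that $G^{(n-2)}\equiv_T\emptyset^{(n)}$, it invokes Theorem~\ref{thm_jumpgen} to show that $G$ cannot even be $n$-generic (else $\emptyset^{(n+1)}\equiv_T G^{(n-1)}\equiv_T G'\oplus\emptyset^{(n)}\equiv_T\emptyset^{(n)}$). This stronger conclusion is exactly what was promised back in Section~\ref{sec_basic}, namely the separation of weakly $n$-generic from $n$-generic needed to make the hierarchy strict. Your hyperimmunity argument, by contrast, only rules out weak $(n+1)$-genericity, which leaves that separation unaddressed. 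So while your route is valid and pleasantly self-contained (it avoids the jump theorem), the paper's route buys the sharper result that the text actually requires.
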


\begin{proof}
By the previous proposition, $\emptyset^{(n)} \equiv_T G^{(n-2)}$ for some weakly $n$-generic set $G$. By Theorem \ref{thm_jumpgen}, if $G$ were $n$-generic we would have $\emptyset^{(n+1)} \equiv_T G^{(n-1)} \equiv_T G' \oplus \emptyset^{(n)} \equiv_T \emptyset^{(n)}$, which cannot be.
\end{proof}

In spite of Theorem \ref{thm_jumpgen}, we are still left with the possibility that some Mathias $n$-generic set has Cohen $1$-generic degree. We now show that this cannot happen.

\begin{theorem}
If $G$ is $n$-generic then it has $\mathbf{GH}_1$ degree, i.e., $G' \equiv_T (G \oplus \emptyset')'$.
\end{theorem}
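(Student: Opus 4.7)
The plan is to show $(G \oplus \emptyset')' \leq_T G'$, since $G' \leq_T (G \oplus \emptyset')'$ is immediate from $G \leq_T G \oplus \emptyset'$. Fix $e$ and consider the formula $\psi_e(X) \equiv \Phi_e^{X \oplus \emptyset'}(e) \downarrow$; after unfolding $n \in \emptyset'$ as its $\Sigma^0_1$ definition, $\psi_e$ is a $\Sigma^0_2$ formula in the free set variable $X$. Let $\mathcal{C}_e$ be the set of conditions forcing $\psi_e(G)$ and $\mathcal{D}_e$ the set of conditions forcing $\neg\psi_e(G)$ (equivalently, with no extension in $\mathcal{C}_e$). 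By Lemma~\ref{lem_forcebounds}(2), $\mathcal{C}_e$ is $\Sigma^0_2$. Since $G$ is $n$-generic with $n \geq 3$, $G$ meets or avoids the $\Sigma^0_2 \subseteq \Sigma^0_3$ set $\mathcal{C}_e$; by Proposition~\ref{prop_gentruth} and Remark~\ref{rem_formneg}, meeting $\mathcal{C}_e$ is equivalent to $\psi_e(G)$, and avoiding $\mathcal{C}_e$ (i.e., satisfying some condition in $\mathcal{D}_e$) is equivalent to $\neg\psi_e(G)$. So deciding $e \in (G \oplus \emptyset')'$ reduces to finding a condition satisfied by $G$ in $\mathcal{C}_e \cup \mathcal{D}_e$.

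Next I would extract concrete low-complexity descriptions of these sets from the proof of Lemma~\ref{lem_forcebounds}(2). The only role of $E$ in $(D,E) \Vdash \psi_e(G)$ is the clause ``$\tau_0 \prec D \cup F$ for every finite $F \subseteq E$'', which collapses to ``$\tau_0 \prec D$ and $|\tau_0| \leq \min E$''. Hence
\[
(D,E) \in \mathcal{C}_e \iff \Phi_e^{D \oplus \emptyset'}(e) \downarrow \text{ using only $G$-part queries }<\min E,
\]
a $\Sigma^{0,\emptyset'}_1$, hence $\Sigma^0_2$, predicate. Dually, any halting computation $\Phi_e^{D' \oplus \emptyset'}(e)\downarrow$ uses only finitely many queries, so for any finite $D' \subseteq D \cup E$ with $D \subseteq D'$ and $\Phi_e^{D' \oplus \emptyset'}(e)\downarrow$ one can truncate the infinite part of $(D,E)$ past all these queries to produce a genuine extension in $\mathcal{C}_e$. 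Therefore
\[
(D,E) \in \mathcal{D}_e \iff \forall \text{ finite } D'\text{ with }D \subseteq D' \subseteq D \cup E, \ \Phi_e^{D' \oplus \emptyset'}(e)\uparrow,
\]
a $\Pi^{0,\emptyset'}_1$, hence $\Pi^0_2$, predicate.

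Finally, since $G$ is high, $\emptyset'' \leq_T G'$, and so $G'$ uniformly decides each of the following: whether a pre-condition $(D,E)$ is a condition (a $\Pi^0_2$ question, using the paper's convention that makes $E$ computable whenever it is infinite), whether $G$ satisfies such a condition (a $\Pi^{0,G}_1$ question once $E$ is computable), and whether $(D,E)$ lies in $\mathcal{C}_e$ or $\mathcal{D}_e$. The reduction computing $(G \oplus \emptyset')'$ from $G'$ enumerates pre-condition indices and returns the verdict on $e$ from the first $(D,E)$ passing the first two checks together with either $(D,E) \in \mathcal{C}_e$ (declaring $e \in (G\oplus\emptyset')'$) or $(D,E) \in \mathcal{D}_e$ (declaring $e \notin (G\oplus\emptyset')'$); $n$-genericity guarantees that such a $(D,E)$ exists. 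The main obstacle is the second step: distilling the $\Sigma^0_2$/$\Pi^0_2$ characterizations of $\mathcal{C}_e$ and $\mathcal{D}_e$ from the formal forcing definitions, exploiting both that $\psi_e$ embeds only a single $\emptyset'$-quantifier and that Mathias extensions can freely truncate their infinite parts past any finite collection of queries.
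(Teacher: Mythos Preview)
Your proposal is correct and follows essentially the same route as the paper: both show that forcing $e \in (G\oplus\emptyset')'$ is $\Sigma^0_2$, that forcing its negation reduces to a $\Pi^0_2$ ``no finite extension forces it'' condition (the key observation, since Lemma~\ref{lem_forcebounds}(3) alone would only give $\Pi^0_3$), and then use $\emptyset'' \leq_T G'$ to search for a satisfied condition deciding the question. The only cosmetic difference is that you phrase things via the formula $\psi_e$ and cite Lemma~\ref{lem_forcebounds} and Proposition~\ref{prop_gentruth}, whereas the paper defines forcing for this particular statement by hand; the substance, including the finite-extension/truncation argument for $\mathcal{D}_e$, is identical.
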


\begin{proof}
A condition $(D,E)$ forces $i \in (G \oplus \emptyset')'$ if there is a $\sigma \in 2^{<\omega}$ such that that $\Phi^\sigma_i(i) \downarrow$ and for all $x < |\sigma|$,
\[
\sigma(x) = 1 \implies (D,E) \forces x \in G \oplus \emptyset' \text{ and } \sigma(x) = 0 \implies (D,E) \forces x \notin G \oplus \emptyset'.
\]
This is thus a $\Sigma^0_2$ relation, as forcing $x \in G \oplus \emptyset'$ and $x \notin G \oplus \emptyset'$ are $\Sigma^0_1$ and $\Pi^0_1$, respectively. We claim that $(D,E)$ forcing $i \notin (G \oplus \emptyset')'$, i.e., $\neg (i \in (G \oplus \emptyset')')$, is equivalent to $(D,E)$ having no finite extension that forces $i \in (G \oplus \emptyset')'$, and hence is $\Pi^0_2$. That forcing implies this fact is clear. In the other direction, suppose $(D,E)$ does not force $i \notin (G \oplus \emptyset')'$, and so has an extension $(D',E')$ that forces $i \notin (G \oplus \emptyset')'$. Let $\sigma$ witness this fact, as above. Then if $P$ and $N$ consist of the $x < |\sigma|$ such that $\sigma(2x) = 1$ and $\sigma(2x) = 0$, respectively, $\sigma$ witnesses that $(D \cup P, \{ x \in E: x > \max P \cup N \})$ also forces $i \in (G \oplus \emptyset')'$.

We now show that $G' \geq_T (G \oplus \emptyset')'$. Let $\mathcal{C}_i$ be the set of conditions that force $i \in (G \oplus \emptyset')'$, and $\mathcal{D}_i$ the set of conditions that force $i \notin (G \oplus \emptyset')'$. Then $\mathcal{C}_i$ is $\Sigma^0_3$ and $\mathcal{D}_i$ is $\Pi^0_2$, and indices for them as such can be found uniformly from $i$. Each $\mathcal{C}_i$ must be either met or avoided by $G$, and as in the proof of Theorem \ref{thm_jumpgen}, $G$ meets $\mathcal{C}_i$ if and only if it does not meet $\mathcal{D}_i$. Which of the two is the case can be determined by $G'$ since $G' \geq_T \emptyset''$ and $\mathcal{C}_i$ and $\mathcal{D}_i$ are both c.e.\ in $\emptyset''$. By Proposition \ref{prop_gentruth}, $G'$ can thus determine whether $i \in (G \oplus \emptyset')'$, as desired.
\end{proof}

Recall that a degree $\mathbf{d}$ is $\mathbf{GL}_n$ if $\mathbf{d}^{(n)} = \mathbf{d} \cup \0^{(n)}$, and that no such degree can be $\mathbf{GH}_1$. It was shown by Jockusch and Posner \cite[Corollary 7]{JP-1978} that every $\overline{\mathbf{GL}}_2$ degree computes a Cohen 1-generic set. Hence, we obtain the following:

\begin{corollary}\label{cor_Mathno1Coh}
Every Mathias $n$-generic set has $\overline{\mathbf{GL}}_m$ degree for all $m \geq 1$. Hence, it is not of Cohen 1-generic degree, but it does compute a Cohen 1-generic.
\end{corollary}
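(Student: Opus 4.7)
The plan is to chain the preceding theorem with the two external facts recalled just above the corollary, without doing any new construction. The immediately preceding theorem shows that any Mathias $n$-generic $G$ has $\mathbf{GH}_1$ degree. Together with the recalled fact that no $\mathbf{GL}_m$ degree is $\mathbf{GH}_1$, this yields at once that $G$ has $\overline{\mathbf{GL}}_m$ degree for every $m \geq 1$. That handles the first sentence of the corollary.

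For the claim that $G$ is not of Cohen $1$-generic degree, I would invoke Jockusch's theorem (cited in Section~5 of the paper) that every Cohen $n$-generic set $H$ satisfies $H^{(n)} \equiv_T H \oplus \emptyset^{(n)}$. Specialising to $n = 1$ shows that every Cohen $1$-generic set has $\mathbf{GL}_1$ degree. Since $G$ has $\overline{\mathbf{GL}}_1$ degree by the first part, no set Turing-equivalent to $G$ can be Cohen $1$-generic, so $G$ is not of Cohen $1$-generic degree.

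For the positive claim that $G$ nevertheless computes a Cohen $1$-generic, I would appeal directly to the Jockusch--Posner result cited immediately before the corollary: every $\overline{\mathbf{GL}}_2$ degree computes a Cohen $1$-generic. Since $G$ has $\overline{\mathbf{GL}}_2$ degree (by the first part with $m = 2$), a Cohen $1$-generic set is $G$-computable.

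There is no real obstacle here: the corollary is a packaging of the preceding theorem on the $\mathbf{GH}_1$ property of Mathias generics together with the two classical facts (Jockusch's jump theorem for Cohen generics, and the Jockusch--Posner theorem on $\overline{\mathbf{GL}}_2$ degrees) that the paper has already recalled. All the substantive work has been done in the preceding theorem.
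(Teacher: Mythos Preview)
Your proposal is correct and matches the paper's intended argument: the corollary is stated without proof, relying entirely on the sentence immediately preceding it (the recalled facts about $\mathbf{GL}_m$ versus $\mathbf{GH}_1$ and the Jockusch--Posner theorem) together with the preceding $\mathbf{GH}_1$ theorem. One tiny correction: Jockusch's jump theorem for Cohen generics is recalled at the end of Section~\ref{sec_basic}, not in Section~5, but your use of it to conclude that Cohen $1$-generics are $\mathbf{GL}_1$ is exactly right.
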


The corollary leaves open the following question, which we have so far been unable to answer. The subsequent results give partial answers.

\begin{question}
Does every Mathias $n$-generic set compute a Cohen $n$-generic set?
\end{question}

\begin{theorem}
If $G$ is Mathias $n$-generic, and $A \leq_T \emptyset^{(n-1)}$ is bi-immune, then $G \oplus A$ computes a Cohen $n$-generic.
\end{theorem}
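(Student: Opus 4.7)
The plan is to define the candidate $C \in 2^\omega$ by reading off $A$ along the enumeration of $G$: if $g_0 < g_1 < \cdots$ lists $G$ in increasing order, set $C(k) = A(g_k)$. Clearly $C \leq_T G \oplus A$, and the remaining task is to show that $C$ meets or avoids every $\Sigma^0_n$ set $\mathcal{S} \subseteq 2^{<\omega}$ of Cohen conditions. I will pull this out of the Mathias $n$-genericity of $G$ by translating $\mathcal{S}$ to the Mathias set
\[
\mathcal{C} = \{(D,E) \text{ a condition} : \sigma_D \in \mathcal{S}\},
\]
where, for $D = \{d_0 < \cdots < d_{|D|-1}\}$, I write $\sigma_D = A(d_0) A(d_1) \cdots A(d_{|D|-1})$. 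Because $A \leq_T \emptyset^{(n-1)}$, the map $D \mapsto \sigma_D$ is $\Delta^0_n$, so $\mathcal{C}$ inherits the $\Sigma^0_n$ complexity of $\mathcal{S}$ (the $\Pi^0_1$ conditionhood clause from Section~\ref{sec_defns} is absorbed since $n \geq 3$).

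If $G$ meets $\mathcal{C}$ via $(D,E)$, then $D \subseteq G \subseteq D \cup E$ combined with $\max D < \min E$ forces $D$ to consist of exactly the first $|D|$ elements of $G$; hence $\sigma_D$ is an initial segment of $C$, and $C$ meets $\mathcal{S}$. If instead $G$ avoids $\mathcal{C}$ via $(D,E)$, then $\sigma_{D'} \notin \mathcal{S}$ for every $D'$ with $D \subseteq D' \subseteq D \cup E$. I would then use bi-immunity of $A$ to convert this into genuine Cohen avoidance: since $A$ is bi-immune and $E$ is an infinite computable set, neither $A \cap E$ nor $\overline{A} \cap E$ can be finite (else $E$ would contribute an infinite computable subset of $A$ or $\overline{A}$). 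So for any $\tau \supseteq \sigma_D$ I can greedily pick $e_1 < e_2 < \cdots < e_k$ in $E$ with $A(e_j) = \tau(|D|+j-1)$, and then $D' = D \cup \{e_1, \ldots, e_k\}$ realizes $\sigma_{D'} = \tau$; paired with $E' = \{x \in E : x > e_k\}$ this is a Mathias extension of $(D,E)$ which would lie in $\mathcal{C}$ if $\tau \in \mathcal{S}$. Hence no extension of $\sigma_D$ lies in $\mathcal{S}$, and $C$ avoids $\mathcal{S}$ at $\sigma_D$.

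The main obstacle I anticipate is complexity bookkeeping: one must verify carefully that substituting $\sigma_D$, a $\Delta^0_n$ function of $D$, into the $\Sigma^0_n$ definition of $\mathcal{S}$ really yields a $\Sigma^0_n$ relation of $D$ — this is precisely where the hypothesis $A \leq_T \emptyset^{(n-1)}$ is used. The bi-immunity step is the other essential ingredient, but it is quick once the $\mathcal{S} \leftrightarrow \mathcal{C}$ correspondence is set up, and it is exactly the property ensuring that restricting attention to $D' \subseteq D \cup E$ loses no Cohen extension of $\sigma_D$.
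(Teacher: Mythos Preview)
Your proof is correct and follows the same overall strategy as the paper: translate each $\Sigma^0_n$ set of Cohen conditions into a $\Sigma^0_n$ set of Mathias conditions using $A$, appeal to $A \leq_T \emptyset^{(n-1)}$ for the complexity bound, and use bi-immunity of $A$ against the infinite computable reservoir $E$ for the avoidance direction. The one difference is the choice of coding. The paper takes the Cohen generic to be the characteristic function of $G \cap A$ and associates to a condition $(D,E)$ the string $D \cap A$ read as a binary word of length $\min E$; you instead take $C(k) = A(g_k)$ and associate the string $\sigma_D$ of length $|D|$. Your encoding makes the avoidance step particularly clean, since every extension $\tau \supseteq \sigma_D$ is realized exactly as $\sigma_{D'}$ for some $D' \supseteq D$ inside $D \cup E$, with no side constraints coming from positions outside $E$. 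One small slip: the clause ``$(D,E)$ is a condition'' is $\Pi^0_2$ (infinitude of $E$), not $\Pi^0_1$ as you wrote; the $\Pi^0_1$ statement in Section~\ref{sec_defns} is about extension between conditions. This does not affect your argument, since $n \geq 3$ absorbs $\Pi^0_2$ into $\Sigma^0_n$ just as well.
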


\begin{proof}
Let $\mathcal{C}_0,\mathcal{C}_1,\ldots$ be a listing of all $\Sigma^0_n$ subsets of $2^{<\omega}$, together with fixed $\emptyset^{(n-1)}$-computable enumerations. For each $i$, let $\mathcal{D}_i$ be the set of all conditions $(D,E)$ such that $D \cap A$, viewed as a binary string of length $\min E$, belongs to $\mathcal{C}_i$. Then $\mathcal{D}_i$ is a $\Sigma^0_n$ set of conditions, and as such must be met or avoided by $G$. If $G$ meets $\mathcal{D}_i$ then $G \cap A$, viewed as an element of $2^\omega$, meets $\mathcal{C}_i$. If $G$ avoids $\mathcal{D}_i$, we claim that $G \cap A$ must avoid $\mathcal{C}_i$. Indeed, suppose $G$ avoids $\mathcal{D}_i$ via $(D,E)$. Since $A$ and $\overline{A}$ are each co-immune, they intersect $E$ infinitely often, and so if $D \cap B$ had an extension $\tau$ in $\mathcal{C}_i$, we could make a finite extension $(D',E')$ of $(D,E)$ so that $D' \cap A = \tau$. This extension would belong to $\mathcal{D}_i$, a contradiction.
\end{proof}

It follows, for example, that the join of $G$ with any non-computable $S \leq_T \emptyset'$ computes a Cohen $n$-generic. Our last result shows that the kind of coding employed in the above theorem cannot be used also for $S = \emptyset$.

\begin{proposition}
If $G$ is Mathias $n$-generic and $H$ is Cohen $n$-generic then $H$ is not many-one reducible to $G$.
\end{proposition}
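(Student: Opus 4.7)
Assume for contradiction that $H = f^{-1}(G)$ via a computable $f$, so $H(x) = G(f(x))$ for every $x$. First I would show that $f$ is $1$-to-$1$ on a cofinite set. If there were infinitely many pairs $a < b$ with $f(a) = f(b)$, then
\[
S = \{\sigma \in 2^{<\omega} : \exists a < b < |\sigma|\, [f(a) = f(b) \wedge \sigma(a) \neq \sigma(b)]\}
\]
would be a dense $\Sigma^0_1$ set of strings (density: given $\tau$, pick any collision pair with $b \geq |\tau|$ and freely set the new values to disagree at positions $a$ and $b$). The Cohen $n$-generic $H$ must meet $S$, yielding $a < b$ with $f(a) = f(b)$ but $H(a) \neq H(b)$, contradicting $H(a) = G(f(a)) = G(f(b)) = H(b)$.

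Fix a cofinite computable $C \subseteq \omega$ on which $f$ is injective. Using that every Mathias $n$-generic $G$ is cohesive, I would show $H$ itself is cohesive. Given any c.e.\ set $V$, the c.e.\ image $f(V \cap C)$ either meets $G$ in a finite set or differs from $G$ by a finite set. If $G \cap f(V \cap C)$ is finite, then injectivity of $f|_C$ forces at most finitely many $x \in V \cap C$ with $f(x) \in G$, so $H \cap V$ is finite. If $G \setminus f(V \cap C)$ is finite, then injectivity of $f|_C$ gives $f(C \setminus V) \cap f(V \cap C) = \emptyset$, so any $x \in (H \cap C) \setminus V$ has $f(x) \in G \setminus f(V \cap C)$, a finite set, and again only finitely many such $x$ exist; hence $H \setminus V$ is finite. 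Either way $H$ is cohesive.

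But Cohen $1$-generic $H$ cannot be cohesive: the dense $\Sigma^0_1$ sets $\{\sigma : |\{k : 2k < |\sigma|,\ \sigma(2k) = 1\}| \geq n\}$ and $\{\sigma : |\{k : 2k+1 < |\sigma|,\ \sigma(2k+1) = 1\}| \geq n\}$ for each $n$, all met by $H$, force both $H \cap 2\omega$ and $H \setminus 2\omega$ to be infinite, contradicting cohesiveness against the computable set $V = 2\omega$. The main obstacle is the injectivity step: cohesiveness does not transfer through many-to-one computable reductions in general (as $f(x) = \lfloor x/2 \rfloor$ applied to a cohesive $G$ shows), and it is precisely the rigidity that $m$-reducibility imposes at collisions of $f$ which Cohen genericity is able to rule out, reducing the problem to the nearly-injective case where cohesiveness does transfer.
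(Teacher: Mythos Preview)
Your proof is correct but takes a genuinely different route from the paper's. The paper argues as follows: from cohesiveness of $G$ and non-computability of $H$ one gets $G \subseteq^* \ran(f)$, and then for all sufficiently large $a$ the equivalences $a \in G \Leftrightarrow (\exists x)[f(x)=a \wedge x\in H] \Leftrightarrow (\forall x)[f(x)=a \to x\in H]$ show $G \leq_T H$, hence $G \equiv_T H$; this contradicts the earlier observation (end of Section~\ref{sec_basic}) that no Mathias $n$-generic has Cohen $2$-generic degree, which in turn rests on highness of Mathias generics together with Jockusch's jump characterization of Cohen generics.

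Your argument avoids that degree-theoretic detour entirely. You use Cohen $1$-genericity of $H$ to force $f$ to be injective on a cofinite set, then push the cohesiveness of $G$ back through $f$ to conclude that $H$ is cohesive, and finish with the elementary fact that a Cohen $1$-generic contains infinitely many even and infinitely many odd numbers. This is more self-contained and actually yields the stronger statement that no Cohen $1$-generic is $m$-reducible to any cohesive set; neither the jump property of Cohen generics nor the highness of Mathias generics is needed. The paper's approach, on the other hand, is shorter once those earlier results are in hand and illustrates how the degree separation already established does real work.
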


\begin{proof}
Seeking a contradiction, suppose $f$ is a computable function such that $f(H) \subseteq G$ and $f(\overline{H}) \subseteq \overline{G}$. Since $G$ is cohesive and $\ran(f)$ is c.e., and since $H$ is non-computable, it follows that $G \subseteq^* \ran(f)$. Thus, for all sufficiently large $a$,
\[
a \in G \Iff (\forall x)[f(x) = a \implies x \in H] \Iff (\exists x)[f(x) = a \wedge x \in H]. 
\]
We conclude that $G \leq_T H$, and hence that $G \equiv_T H$. But this contradicts our observation at the end of Section \ref{sec_basic} that no Mathias $n$-generic can have Cohen $n$-generic degree.
\end{proof}

The following question is inspired by Proposition 2.8 of Jockusch \cite{Jockusch-1980}.

\begin{question}
If $\mathbf{a}$ and $\mathbf{b}$ are two Mathias generic degrees, must $\mathcal{D}(\mathbf{\leq a})$ and $\mathcal{D}(\mathbf{\leq b})$ be elementarily equivalent?
\end{question}

\end{document}